\documentclass[12pt, oneside, psamsfonts]{amsart}

\newif\ifPDF
\ifx\pdfoutput\undefined\PDFfalse
\else \ifnum \pdfoutput > 0 \PDFtrue
        \else \PDFfalse
        \fi
\fi

\usepackage[centertags]{amsmath}
\usepackage{amsfonts}
\usepackage{mathrsfs}
\usepackage{textcomp}
\usepackage{amssymb}
\usepackage{amsthm}
\usepackage{newlfont}
\usepackage[all]{xy}

\ifPDF
  \usepackage[pdftex]{color, graphicx}
  \usepackage[pdftex, bookmarks, colorlinks]{hyperref}
  \hypersetup{colorlinks=false}

\else
  \usepackage{color}
  \usepackage[dvips]{graphicx}
  \usepackage[dvips]{hyperref}
\fi

\usepackage{fixltx2e}
\usepackage{tkz-graph}
\tikzset{EdgeStyle/.style = {->}}
\tikzset{LabelStyle/.style= {fill=yellow}}
\usetikzlibrary{shapes,snakes,calendar,matrix,backgrounds,folding}

\usepackage[scale=0.8]{geometry}

\usepackage[pagewise, mathlines, displaymath]{lineno}

\newtheorem{thm}{Theorem}[section]

\newtheorem{lem}[thm]{Lemma}
\newtheorem{prop}[thm]{Proposition}
\theoremstyle{definition}

\theoremstyle{remark}
\newtheorem{rem}[thm]{Remark}

\numberwithin{equation}{section}

\newcommand{\norm}[1]{\left\Vert#1\right\Vert}
\newcommand{\abs}[1]{\left\vert#1\right\vert}

\newcommand{\Real}{\mathbb R}
\newcommand{\Int}{\mathbb Z}
\newcommand{\Comp}{\mathbb C}

\newcommand{\eps}{\varepsilon}

\begin{document}

%OPTIONS FOR LINE NUMBERS----------------------------------
%\linenumbers

\title{A simple separable C*-algebra which is not singly generated}

\author{George A. Elliott}
\address{Department of Mathematics, University of Toronto, Toronto, Ontario, Canada~\ M5S 2E4}
\email{elliott@math.toronto.edu}

\author{Chun Guang Li}
\address{
School of Mathematics and Statistics, Northeast Normal University, Changchun, 130024, P. R.~China
}
\email{licg864@nenu.edu.cn}

\author{Zhuang Niu}
\address{Department of Mathematics, University of Wyoming, Laramie, WY, 82071, USA}
\email{zniu@uwyo.edu}

\thanks{G.A.E.~ is supported by an NSERC Discovery Grant. Z.N.~is supported by a Simons Foundation Grant (MP-TSM-00002606).}

\keywords{Simple C*-algebras, generators, AH algebras}

%\thanks{}
%\keywords{}
\date{\today}
%\dedicatory{}
%\commby{}

%------------------------------------------abstract--------------------------------------

\begin{abstract}
It is shown that there is a simple unital separable AH algebra which is not singly generated.
\end{abstract}

\maketitle

\setcounter{tocdepth}{1}
%\tableofcontents

\section{Introduction}
In \cite{Kadison-list}, among other questions, Kadison asked whether every von Neumann algebra acting on a separable Hilbert space is singly generated (see \cite{Ge_2003} for a survey of Kadison's problem list). Many von Neumann algebras are known to be singly generated, and this generator problem was reduced to the $\mathrm{II}_1$ factors (\cite{Willig_1974}), but it is still open in general. For C*-algebras, the same question has been asked: whether every simple separable C*-algebra is singly generated (without simplicity, C*-algebras such as $\mathrm{C}(S^2)$ provide examples which are not singly generated; see \cite{Nagisa-04}). Stable C*-algebras, UHF-absorbing C*-algebras, $\mathcal Z$-absorbing C*-algebras, as well as Villadsen's algebras of the first type, are known to be singly generated (\cite{Topping_1968}, \cite{Olsen_1976}, \cite{Li-Sh-AD}, \cite{Thiel-Winter-14} \cite{LNR-26}). Clearly, a positive answer for this question of C*-algebras  will imply a positive answer to the von Neumann algebra question. In this note, using Villadsen's second type construction (\cite{Vill-sr}), we show that there are simple separable C*-algebras which are not singly generated (Theorem \ref{generator-thm}). 
\begin{thm}
There are simple unital separable AH algebras $A$ such that $\mathrm{C^*}(1, g) \neq A$ for all $g \in A$. 
\end{thm}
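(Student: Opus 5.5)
We will realize $A$ as an inductive limit $A=\varinjlim (A_n,\phi_n)$ with $A_n = M_{m_n}(\mathrm{C}(X_n))$, using Villadsen's second-type construction \cite{Vill-sr}. Each $X_n$ is a high-dimensional product of spheres (or of $\mathbb{CP}$'s), and the connecting maps are of the form
\[
\phi_n(f) = \mathrm{diag}\bigl(f\circ\pi_1,\dots,f\circ\pi_{k},\, f(x_1),\dots,f(x_l)\bigr).
\]
Here the $\pi_i$ are coordinate projections $X_{n+1}=X_n^{k}\to X_n$, and the $x_j$ are point evaluations chosen densely enough to force simplicity. The number of point evaluations is kept small relative to the coordinate projections, so that the topology persists.

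The obstruction to single generation is dimensional. If $\mathrm{C^*}(1,g)=A$, then a standard perturbation argument gives, for large $n$, an element $g_n\in A_n$ such that $\mathrm{C^*}(1,g_n)$ approximately contains the image of a fixed finite set of generators of $A_1$. We will use the generators of the form $\mathrm{diag}$-entries, i.e.\ coordinate functions. Writing $g_n=a+ib$ with $a,b$ self-adjoint in $M_{m}(\mathrm{C}(X))$, single generation means roughly that the map $x\mapsto g_n(x)\in M_m(\mathbb C)$ separates enough of the structure. Nagisa's argument for $\mathrm{C}(S^2)$ \cite{Nagisa-04} and its matrix version say the following: if $M_m(\mathrm{C}(X))$ is singly generated then $\dim X$ is at most of order $2m^2-2$ or so, since a generator gives an embedding of $X$ into a space of pairs of self-adjoint matrices modulo unitary conjugation. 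That space has real dimension about $2m^2-m$ minus orbit dimension. We will use a robust, approximate version of this. Suppose a finite set $F\subset A_n$ carries nontrivial cohomological information, say a projection whose Euler class, or a top Chern class, in $H^{*}(X_n)$ is nonzero. Then this cannot lie within $\varepsilon$ of $\mathrm{C^*}(1,g_n)$, because $\mathrm{C^*}(1,g_n)$ is a quotient of $\mathrm{C}(\sigma)$-type algebras of low "dimension." More precisely, $\mathrm{C^*}(1,g)$ is a quotient of the universal C*-algebra on one generator, but for a matrix algebra over $X$ the relevant invariant is the subhomogeneous dimension. A generated sub-C*-algebra of $M_m(\mathrm{C}(X))$ is $m$-subhomogeneous with spectrum a continuous image of $X$. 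The key is instead to use ratios: Villadsen's estimate keeps $\dim X_n/m_n$ bounded below by a fixed $c>0$. We then want a lemma saying that if $\dim X/m$ is large, then a certain nontrivial projection (e.g.\ the image of the Bott projection) cannot be approximated in any singly generated subalgebra $\mathrm{C^*}(1,g)$ of $M_m(\mathrm{C}(X))$.

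The steps are as follows.

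(1) Construct the Villadsen-type system with $\dim X_n/m_n\ge c$ for all $n$, choosing $c$ larger than a universal constant. Ensure the point evaluations are dense, so that $A$ is simple.

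(2) Identify a fixed projection $p\in A_1$ whose image in $A_n$ has a nonvanishing characteristic class of degree proportional to $\dim X_n$. This is Villadsen's nonvanishing Euler/Chern class computation, which survives the connecting maps because the point-evaluation part is small.

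(3) Prove the key lemma: if $p\in M_m(\mathrm{C}(X))$ is within $1/4$ of $\mathrm{C^*}(1,g)$, then $p$ is unitarily close to a projection functionally determined by $g$. Concretely, $p=h(g,g^*)$ for a noncommutative polynomial, so $p$ is pulled back along $x\mapsto g(x)$ from a projection over the space $\{(a,b)\}\cong\mathbb R^{2m^2}$. One then restricts to a compact set on which the relevant eigenvalue gaps persist. Its characteristic classes of high degree must then vanish, by a dimension count on the image of $g$ in a space of dimension $\le 2m^2$.

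(4) Compare with (2) to get a contradiction.

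The main obstacle is step (3): pulling back along $g$ does not obviously kill classes, since $\mathbb R^{2m^2}$ is contractible but the pulled-back projection lives only on the open gap set. One must instead argue via the rank/dimension ratio. The class of degree about $c\,m$ versus the ambient dimension $2m^2$ is not directly comparable, so I expect to need the sharper structure of $\mathrm{C^*}(1,g)$. My first attempt would be Nagisa-type reasoning: for generic $x$, $g(x)$ generates $M_m$ irreducibly. I would stratify $X$ by the reducibility type of $g(x)$ and bound degrees stratum by stratum, in the way Villadsen bounds stable rank.
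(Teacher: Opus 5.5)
There is a genuine gap, and it starts with the choice of algebra. The system you describe ($X_{n+1}=X_n^{k}$, connecting maps $\mathrm{diag}(f\circ\pi_1,\dots,f\circ\pi_k,f(x_1),\dots,f(x_l))$ with all spectral projections trivial) is Villadsen's construction of the \emph{first} type, not the second. The paper's introduction points out that such algebras are singly generated \cite{LNR-26}, so no version of steps (1)--(4) can work for it.

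Independently of this, the key lemma of step (3) is false in the form you need. In a Villadsen-type system $\dim X_n$ grows only linearly in $m_n$. The dimension count behind Nagisa's argument \cite{Nagisa-04} only obstructs single generation when $\dim X$ is of order $m^2$; indeed $M_m(\mathrm{C}(X))$ is singly generated once $\dim X$ is small compared with $m^2$. So for large $n$ every projection of $A_n$ lies in some $\mathrm{C^*}(1,g)$. A class of degree $\sim c\,m_n$ is not killed by passing through a space of dimension $\sim 2m_n^2$; this is exactly the mismatch you noticed at the end. No argument that looks at the whole matrix $g(x)$ at once can produce the contradiction.

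The missing idea is to localize to one distinguished corner, and to make that localization forced by topology. In the paper, $A_n=p_n(\mathrm{C}(X_n)\otimes\mathcal K)p_n$ with $X_n=M\times\mathbb C\mathrm P^{2}\times\cdots\times\mathbb C\mathrm P^{2^n}$. The projection $p_n$ splits as a trivial line $q_1$ carrying $f\circ\pi^M_n$ (the single copy of the coordinate projection), plus $2^{i-1}$ copies of the line bundle $\eta_{2^i}$ pulled back from the canonical bundle of $\mathbb C\mathrm P^{2^i}$, which carry point evaluations.

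The off-diagonal entries $q_igq_1$, $q_ig^*q_1$ of an approximate generator $g\in A_n$ form a section of $F=(\zeta_n\ominus\theta)^{\oplus 2}$. Its top Chern class $\prod_i(\pi^{(i)}_n)^*(\gamma_{2^i}^{2^i})$ is nonzero. After a transversal perturbation, the zero set $Z(g)$ is a $d$-dimensional submanifold with $(\pi^M_n)_*[Z(g)]=[M]$, by the Thom--Porteous formula. On $Z(g)$ the element $g$ commutes with $q_1$, so $q_1P_i(g)q_1=P_i(q_1gq_1)$, where $q_1gq_1$ is a scalar function.

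The paper takes the finite set to be the coordinate functions of $M\subseteq\mathbb C^{\tilde d}$, not a projection. The map $Z(g)\to M$ is then homotopic to a map factoring through a planar set. That kills $\mathrm H_d$ for $d\ge 2$, contradicting $(\pi^M_n)_*[Z(g)]=[M]\neq 0$.

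With trivial bundles, as in your construction, the corresponding Euler class vanishes, so this block-diagonal locus can be made empty or null-homologous. That is why the twisting by the $\eta_{2^i}$ is essential. Your closing idea of stratifying by the reducibility type of $g(x)$ points in the right direction. What is needed, though, is exactly this forced, homologically nontrivial stratum attached to the corner that carries the honest copy of $M$.
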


To prove the theorem, let us suppose the AH algebra (constructed in Section \ref{construction}) were generated by a single element. Then, in the base space of a building block of $A$, we consider a distinguished set on which the approximation of the generator has a diagonal block which is cut by the trivial rank-one projection. Then, restricted to this $1 \times 1$ diagonal block, one can show that the projection map from this distinguished set into a given manifold can be approximated by a map factoring through a compact subset of $\Comp$ (the spectrum of the $1 \times 1$ block of the approximating element of the generator). To see that this leads to to a contradiction, we use the Thom-Porteous formula of intersection theory (which was discovered by A.~Toms (\cite{Toms-26}) to have beautiful applications to the study of AH algebras) to show that the projection map above actually is nonzero on the homology group at some degree at least $2$, but such  a homology group of any compact planar set vanishes.

Our example does not have stable rank one or real rank zero (Proposition \ref{srr}). A natural question then is whether every simple (or possibly even not simple in the real rank zero case) separable C*-algebra of stable rank one or real rank zero is singly generated. Note that a positive answer in either cases will still imply a positive answer to Kadison's generator question.

%Note that separable commutative C*-algebras of stable rank one or real rank zero are singly generated.

AI is not used during the research of this work. 

\section{Villadsen's construction of the second type}\label{construction}

%Let us start with some basic facts of complex projective spaces and product spaces. 

Following Villadsen's construction of the second type, let us construct a simple unital AH algebra which will be shown in the next section not to be singly generated. 

\subsection{Complex projective spaces} Recall that $\Comp\mathrm{P}^n$, $n$-dimensional complex projective space, is a compact orientable differentiable manifold of dimension $2n$.  The homology group $\mathrm H_{2i}(\Comp\mathrm{P}^n)$ is $\Int$ for all $0\leq i \leq n$, and all other homology groups are $\{0\}$. Denote by $x_n \in \mathrm H_{2n}(\Comp\mathrm{P}^n)$ the fundamental class of $\Comp\mathrm{P}^n$.  %us choose a generator of $\mathrm H_{2i}(\Comp\mathrm{P}^n)$, and denote it by $x_{n}^{(i)}$. In particular, $x_n^{(n)}$ is the fundamental class of $\Comp\mathrm{P}^n$, and we also denote it by $x_n$.

%The cohomology group $\mathrm H^{2}(\Comp\mathrm{P}^n)$ is isomorphic to $\mathbb Z$. Denote by $\gamma_n$ is a generator of $\mathrm H^{2}(\Comp\mathrm{P}^n)$. Then the cohomology ring $\mathrm H^{*}(\Comp\mathrm{P}^n)$ is isomorphic to $\mathbb Z[\gamma_n]/\left<  \gamma_n^{n+1}\right>$, where 

The cohomology group $\mathrm H^{2i}(\Comp\mathrm{P}^n)$ is $\Int$ for $0 \leq i \leq n$, and all other cohomology groups are $\{0\}$. %Choose $\gamma_n \in \mathrm H^{2}(\Comp\mathrm{P}^n)$ to be the generator such that
Set
$$\gamma_n = c_1(\xi_n) \in  \mathrm H^{2}(\Comp\mathrm{P}^n) ,$$
where $\xi_n$ is the canonical line bundle of $\Comp\mathrm{P}^n$ and $c_1(\cdot)$ is the first Chern class. Then $\gamma_n$ is a generator of $\mathrm H^{2}(\Comp\mathrm{P}^n)$, and its cup power $\gamma_n^i \in \mathrm H^{2i}(\Comp\mathrm{P}^n)$ is a generator of $\mathrm H^{2i}(\Comp\mathrm{P}^n)$. In fact, $\gamma_n$ is the generator of the cohomology ring of $\Comp\mathrm{P}^n$.  By Poincar\'{e} duality, for each $i=0, ..., n$, the element $$x_n^{(n-i)} := \left<\gamma_n^i, x_n \right> \in \mathrm H_{2(n-i)}( \Comp \mathrm{P}^n)$$
is a generator of $\mathrm H_{2(n-i)}( \Comp \mathrm{P}^n)$, where $\left< \cdot, \cdot\right>$ denotes the cap product. (See, for instance, \cite{Hatcher-AT}.)

\subsection{The product space}
Let $M$ be a compact orientable differentiable manifold such that $d = \mathrm{dim}(M) \geq 2$. For each $n = 0, 1, 2, ... $, define 
\begin{equation}\label{base-space}
X_{n} = M \times \underbrace{\Comp\mathrm P^{2} \times \cdots \times \Comp\mathrm P^{2^i} \times \cdots  \times \Comp\mathrm P^{2^{n}}}_{n}.
\end{equation}
Note that
\begin{equation}\label{dim-Xn}
\mathrm{dim}(X_{n}) = d + 2(2 + \cdots + 2^n) = d + 2(2^{n+1} - 2).
\end{equation}
The product space $X_n$, $n=0, 1, 2, ... $, is still a compact orientable manifold, and by the K\"{u}nneth theorem, the fundamental class of $X_n$ is given by
%\begin{eqnarray*}
%\mathrm{H}_{j}(X_{n+1}) & = & \bigoplus_{j_0+2j_1+\cdots +2j_{n} = j} \mathrm{H}_{j_0}(D) \otimes \mathrm{H}_{2j_1}(\mathbb C\mathrm P^2) \otimes \cdots \otimes \mathrm{H}_{2j_n}(\mathbb C\mathrm P^{2^{n}}) \\
%& = & \bigoplus_{j_0+\cdots +j_{n} = j} \Int[c_{j_0} \otimes x_{2}^{(j_1)} \otimes \cdots \otimes  x_{2^{n}}^{(j_{n})}],
%\end{eqnarray*}
%where $c_{j_0} \in \mathrm{H}_{j_0}(D)$,  $x_{2^{k}}^{(j_k)} \in \mathrm H_{2j_k}(\Comp\mathrm{P}^{2^{k}})$, $k=1, ..., n$, are the generator (above).
%In particular, the fundamental class of $X_{n+1}$ is given by 
$$ [X_{n}] :=  e \otimes x_{2} \otimes \cdots \otimes x_{2^i} \otimes \cdots \otimes x_{2^{n}},$$
where %$x_k \in \mathrm{H}_{2k}(\Comp\mathrm P^{k})$ stands for the fundamental class of $\Comp\mathrm P^{k}$ and 
$e \in \mathrm{H}_{d}(M)$ is the fundamental class of $M$.

\subsection{The construction of $A$}
Let $X$ and $Y$ be compact metrizable spaces. Let $\lambda_1, ..., \lambda_n: Y \to X$ be continuous and let $q_1, ..., q_n \in \mathrm{C}(Y)\otimes \mathcal K$ be mutually orthogonal projections, where $\mathcal K$ is the algebra of compact operators on a separable Hilbert space. Then the generalized diagonal map $\mathrm{C}(X) \to \mathrm C(Y) \otimes \mathcal K$ associated with the spectral data $(\lambda_i, q_i)$, $i=1, ..., n$, is defined by
$$ f \mapsto (x \mapsto \sum_{i=1}^n (f \circ \lambda_i)(x)q_i(x)).$$ Let us denote this map just by $(\lambda_i, q_i)_{1\leq i\leq n} = (\lambda_i, q_i)$.

The composition of the two maps associated with spectral data,  $$(\alpha_i, q^{(1)}_i): \mathrm{C}(X) \to \mathrm C(Y) \otimes \mathcal K,\ i=1, ..., n,$$ and 
$$(\beta_j, q^{(2)}_j): \mathrm{C}(Y) \to \mathrm C(Z) \otimes \mathcal K \quad j=1, ..., m, $$ is the map associated with the spectral data $$(\alpha_i\circ\beta_j, \beta_j^*(q^{(1)}_i) \otimes q^{(2)}_j),\quad i=1, ..., n,\ j=1, ..., m,$$ where $p\otimes q$ denotes the tensor product of $p$ and $q$ considered as vector bundles.

Let us construct the C*-algebra $A = \varinjlim A_n$: The base space of $A_n$, $n = 0, 1, ...,$ is given by \eqref{base-space}, i.e., 
%$$X_1 = D, \quad X_{i+1} = X_i \times \mathbb C\mathrm{P}^{d_{i, 1}} \times \cdots \times \mathbb C\mathrm{P}^{d_{i, s_i}}, \quad i=1, 2, ..., $$ %$X_i:=D \times Y_i$,
$$X_0 = M, \quad X_{n} = X_{n-1} \times \mathbb C\mathrm{P}^{2^{n}}, \quad n=1, 2, ....$$ 
Choose $y_n \in X_{n}$ in such a way that for every $n=0, 1, 2, ...$, the sequence of points   
$$y_n, \pi_{n+1}^{(1)}(y_{n+1}), ..., (\pi_{n+1}^{(1)} \circ\cdots\circ \pi_{n+k}^{(1)})(y_{n+k}), ...$$
is dense in $X_n$, where $$\pi_{n+i}^{(1)}: X_{n+i} = X_{n+i - 1} \times \Comp\mathrm P^{2^n} \to X_{n+i-1},\quad i=0, 1, 2, ...,$$ is the projection to the first factor.

Consider the inductive system 
$$
%\xymatrix{
\mathrm{C}(X_0) \to p_1 (\mathrm{C}(X_1) \otimes \mathcal K) p_1 \to \cdots \to  A = \varinjlim A_n, 
%},
$$
where the map $A_{n-1} \to A_{n}$, $n=1, 2, ...$, is induced by the spectral data 
$$(\pi^{(1)}_{n}, \theta)\quad \mathrm{and}\quad (y_{n}, (\pi^{(2)}_{n})^*(\xi_{2^{n}})),$$
where $\theta$ is the (trivial) constant line bundle over $X_n$, and $\pi_n^{(2)}: X_n \to \Comp\mathrm P^{2^n}$ is the standard projection (and recall that $\xi_{2^n}$ is the canonical line bundle over $\Comp \mathrm P^{2^n}$). Then $A$ is a simple unital separable AH algebra.

Recall the notation 
\begin{equation}\label{prod-decomp}
X_{n} = M \times \mathbb C\mathrm{P}^{2} \times \cdots \times \mathbb C\mathrm{P}^{2^{i}} \times \cdots \times \mathbb C\mathrm{P}^{2^{n}},\quad n = 1, 2, ... . 
\end{equation}
The vector bundle (over $X_n$) associated to the projection $p_n$ above is
\begin{equation}\label{decomp-1-bundle}
 \zeta_n: = \theta \oplus \eta_{2} \oplus \cdots \oplus (\underbrace{\eta_{2^{i}}\oplus \cdots \oplus \eta_{2^{i}}}_{2^{i-1}}) \oplus \cdots \oplus (\underbrace{\eta_{2^{n}}\oplus \cdots \oplus \eta_{2^{n}}}_{2^{n-1}}),
 \end{equation}
where $\eta_{2^{i}} = (\pi_{n}^{(i)})^*(\xi_{2^{i}})$, and $\pi_{n}^{(i)}: X_{n} \to \mathbb C\mathrm{P}^{2^{i}}$, $i=1, ..., n$, is the standard projection map associated with  \eqref{prod-decomp}. 

Note that
\begin{equation}\label{chern-class-small}
c_{2^n - 1}(\zeta_n\ominus\theta) = (\pi_{n}^{(1)})^*(\gamma_2) \cup \cdots \cup ((\pi_{n}^{(i)})^*(\gamma_{2^{i}}))^{2^{i-1}} \cup \cdots \cup ((\pi_{n}^{(n)})^*(\gamma_{2^n}))^{2^{n-1}}, 
\end{equation}
where $c_i(\cdot)$ denotes the $i$-th Chern class.

Also note that 
$$ \mathrm{rank}(p_n) = 2^{n}, \quad n=0, 1,  2, ..., $$ 
and the map $\phi_{0, n}: A_0 \to A_n$ is given by
\begin{eqnarray}\label{decp-0-n}
 f & \mapsto & (f \circ\pi_{n}^M) \oplus (f(y_1)\eta_2) \oplus \cdots \oplus (\underbrace{f(y_{i-1})\eta_{2^{i}}\oplus \cdots \oplus f(y_{i-1})\eta_{2^{i}}}_{2^{i-1}}) \\ 
 & & \oplus \cdots  \oplus  (\underbrace{f(y_{n-1})\eta_{2^{n}}\oplus \cdots \oplus f(y_{n-1})\eta_{2^{n}}}_{2^{n-1}}), \nonumber
 \end{eqnarray}
where $\pi_n^M: X_n \to M$ is the standard projection map associated with  \eqref{prod-decomp}.

\section{Generators of $A$}

\subsection{Generators of $A$}
Let us show that the C*-algebra $A$ constructed in the previous section is not singly generated:
\begin{thm}\label{generator-thm}
For every $g \in A$, one has $\mathrm{C}^*(1, g) \neq A$. In fact, there is a finite set $\mathcal E \subseteq A$ such that for every $g \in A$, the sub-C*-algebra $\mathrm{C^*}(1, g)$ does not contain $\mathcal E$.
\end{thm}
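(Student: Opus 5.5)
The plan is to take $\mathcal E = \{\phi_{0,\infty}(f_1), \dots, \phi_{0,\infty}(f_m)\}$, where $f_1,\dots,f_m \in \mathrm{C}(M)$ are real-valued functions such that $F=(f_1,\dots,f_m)\colon M \to \Real^m$ is a smooth embedding. We argue by contradiction. Suppose $g\in A$ and $\mathcal E \subseteq \mathrm{C^*}(1,g)$. Fix a small $\eps>0$. Then there are noncommutative polynomials $P_k$ in $g, g^*$ with $\|\phi_{0,\infty}(f_k) - P_k(g,g^*)\| < \eps$. Approximate $g$ by an element $g_n \in A_n$ for $n$ large. Since the $P_k$ are fixed polynomials, we get $\|\phi_{0,n}(f_k) - P_k(g_n, g_n^*)\| < 2\eps$ in $A_n = p_n(\mathrm{C}(X_n)\otimes\mathcal K)p_n$.

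The point is to look at the trivial summand $\theta$ of $\zeta_n$ in \eqref{decomp-1-bundle}. On it, by \eqref{decp-0-n}, $\phi_{0,n}(f_k)$ acts as $f_k\circ \pi_n^M$. The other summands carry only constants $f_k(y_{i-1})$. Let $e$ be the rank-one projection onto $\theta$ and $e^\perp = p_n - e$. If $g_n$ were block-diagonal with respect to $e \oplus e^\perp$ at a point $x$, then $P_k(g_n,g_n^*)$ would be block-diagonal too. Its $\theta$-corner would be $P_k(h(x),\overline{h(x)})$, where $h(x) = \langle g_n(x)\theta,\theta\rangle \in \Comp$. Consequently $F\circ\pi_n^M(x)$ would be $2\eps$-close to $Q(h(x))$ for a fixed continuous $Q\colon\Comp\to\Real^m$.

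To arrange this, I would let $Z\subseteq X_n$ be the distinguished set where the off-diagonal corner $e^\perp g_n e$ (and $e g_n e^\perp$) vanishes, or is small. On $Z$ the map $\pi_n^M$, followed by a retraction of a neighbourhood of $F(M)$ back onto $M$, is homotopic to a map factoring through the compact set $h(Z)\subseteq\Comp$. Since $\check{\mathrm H}^j$ of a compact planar set vanishes for $j\ge 2$, the induced map $(\pi_n^M|_Z)^*\colon \mathrm H^d(M)\to \mathrm H^d(Z)$ is zero, because $d\ge2$. Continuity of Čech cohomology lets us pass from "small" to a genuine closed set.

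The contradiction comes from showing $(\pi_n^M|_Z)^*[M]^*\neq 0$, and this is the main obstacle. The set $Z$ is the zero locus of the section $x\mapsto e^\perp g_n(x) e$ of $\mathrm{Hom}(\theta, \zeta_n\ominus\theta)\cong \zeta_n\ominus\theta$, which has rank $2^n-1$. After a generic perturbation of $g_n$, which only moves things by $\eps$ and must be made compatible with the estimates above, this section is transverse to zero. Then $Z$ is a submanifold Poincaré dual to the top Chern class $c_{2^n-1}(\zeta_n\ominus\theta)$. This is the Thom–Porteous input. Equation \eqref{chern-class-small} gives this class explicitly. Capping with $[X_n]$ and using $\gamma_{2^i}^{2^{i-1}}\cap x_{2^i} = x_{2^i}^{(2^{i-1})}\neq0$ gives
$$[Z] = e\otimes x_2^{(1)}\otimes\cdots\otimes x_{2^n}^{(2^{n-1})}.$$
Its pushforward under $\pi_n^M$ is $0$ in $\mathrm H_d(M)$ unless we pair differently. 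So instead we show $\mathrm H^d(M)\to \mathrm H^d(Z)$ is injective. Pull back the class $\omega\otimes\prod_i(\pi_n^{(i)})^*\gamma_{2^i}^{2^{i-1}}$, where $\omega$ is the orientation class of $M$; its evaluation on $[Z]$ is $\pm1$. Hence $(\pi_n^M)^*\omega$ restricted to $Z$ is nonzero, since it cups with the restricted $\mathrm{CP}$-classes to something nonzero.

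Care is needed in two places. First, we must ensure the approximate vanishing (not exact vanishing) of the off-diagonal corner still forces the approximate factorization through $\Comp$. This is done with a functional-calculus estimate on $P_k$ by choosing a tubular neighbourhood of $Z$. Second, the vanishing of cohomology for planar sets must be applied to the homotopic factorized map, using the fact that $F$ is an embedding.
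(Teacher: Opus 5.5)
Your overall strategy is the paper's: compress to the trivial summand $\theta$, factor $\pi_n^M$ through a compact planar set, and contradict this with Thom--Porteous. However, the set on which you run Thom--Porteous is not the set on which the factorization holds. In the key paragraph you take $Z$ to be the zero locus of the single section $x\mapsto e^\perp g_n(x)e$ of $\zeta_n\ominus\theta$, which has rank $2^n-1$. On that set $g_n(x)$ is only block upper triangular, $g_n(x)=\left(\begin{smallmatrix} h & b\\ 0 & c\end{smallmatrix}\right)$. The $\theta$-corner of a $*$-polynomial is then not a function of $h$ alone; for instance $e\,g_ng_n^*\,e=\abs{h}^2+bb^*$. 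So the estimate $F\circ\pi_n^M(x)\approx Q(h(x))$ is unavailable on your $Z$. Consequently the nonvanishing of $(\pi_n^M|_Z)^*$ on $\mathrm{H}^d$, which you do obtain correctly by pairing with $\omega\otimes\prod_i\gamma_{2^i}^{2^{i-1}}$, contradicts nothing. On the smaller set where $g_n$ really is block diagonal, your computation gives no information, since a nonzero class on $Z$ may restrict to zero on a subset. The symptom was that your $[Z]$ has degree $d+2^{n+1}-2>d$, so its pushforward to $M$ vanishes and you had to switch to a pairing argument.

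What is missing is to impose both corner conditions simultaneously. Use the section $\sigma=(e^\perp g_n^*e,\ e^\perp g_ne)$ of $F=(\zeta_n\ominus\theta)\oplus(\zeta_n\ominus\theta)$, which has rank $2(2^n-1)$. Make it transverse to the zero section by a small perturbation of $g_n$; this is harmless for the fixed polynomials $P_k$, so no tubular-neighbourhood or \v{C}ech-continuity argument is needed, since exact vanishing is arranged directly. Then $Z=\sigma^{-1}(0)$ has real dimension exactly $d$. By \eqref{chern-class-small}, the relevant class is $c_{2^n-1}(\zeta_n\ominus\theta)^2=\prod_i\bigl((\pi_n^{(i)})^*\gamma_{2^i}\bigr)^{2^i}$, which is the top class of every factor $\Comp\mathrm{P}^{2^i}$. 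This is precisely where the multiplicity $2^{i-1}$ in the construction, half of $\dim_{\Comp}\Comp\mathrm{P}^{2^i}$, is used. One gets $\iota_*[Z]=e\otimes 1\otimes\cdots\otimes 1$, hence $(\pi_n^M)_*[Z]=e\neq 0$ directly. For $d\geq 2$ this contradicts the factorization of $\pi_n^M|_Z$, up to homotopy, through $h(Z)\subseteq\Comp$.
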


\begin{proof}
%Choose $p^{(i)}_j$ to be line bundles. 
First, regard $$M \subseteq \Comp^{\tilde{d}}$$ as a submanifold for some $\tilde{d} \in \mathbb N$ (where $\Comp$ is only regarded as the manifold $\Real^2$), and consider the coordinate maps $$e_i: M \to \Comp,\  i=1, ..., \tilde{d}, $$
which we shall regard as elements of $A_0 = \mathrm{C}(M)$.

Let us show that for every $g \in A$, the sub-C*-algebra $\mathrm{C^*}(g)$ does not contain $$\mathcal E  := \{e_1, ..., e_{\tilde{d}}\}. $$ In particular, the C*-algebra $A$ is not singly generated.

First, note that there is $\eps>0$ such that
\begin{enumerate}
\item there is a retraction from the $\eps \tilde{d}$-neighbourhood of $M \subseteq \Comp^{\tilde{d}}$ to $M$, and 
\item if $f, h: Z \to M$ are continuous maps such that $\norm{f(x) - h(x)} < \eps$ for all $x \in Z$, where $Z$ is a compact space, then $[f]_* = [h]_*$ on $\mathrm{H}_*(Z)$.
\end{enumerate}

Now, assume that the set $\mathcal E$ were contained in a singly generated sub-C*-algebra. Then, there would be (noncommutative) polynomials $P_1, ..., P_{\tilde{d}}$ such that for sufficiently large $n$, there is $g \in A_{n} = p_n (\mathrm{C}(X_{n}) \otimes \mathcal K) p_n $ such that
\begin{equation}\label{almost-gen}
 \phi_{0, n}(e_i) \approx_\eps P_i(g),\quad i=1, 2, ..., \tilde{d}.
\end{equation} 

Write
$$ g =(q_1 + \cdots + q_{2^n}) g (q_1 + \cdots + q_{2^{n}}) =  \sum_{i, j = 1}^{2^n} q_i g q_j,$$
where $q_1, ..., q_{2^n}$ are the partition of unity in $A_n$ given by the rank-one projections of the line bundles of \eqref{decomp-1-bundle}. %Moreover, the element $g$ can be chosen to be sufficiently generic in the sense that the bundle maps $q_igq_j$ are regular for all $i, j = 1, ..., 2^n$. 

Regard $$(q_1gq_i)^* = q_i g^*q_1\quad \mathrm{and} \quad q_i gq_1, \quad i=2, ..., 2^n, $$ as morphisms from the trivial line bundle $\theta$ to the line bundle corresponding to $q_i$, %$\eta_\bullet$, 
and then consider the map 
$$\sigma:=(q_2g^*q_1, ..., q_{2^n}g^*q_1, q_2gq_1, ..., q_{2^n}gq_1)$$
which is a morphism from the trivial line bundle $$E: = \theta$$ to the vector bundle 
$$F:= %(q_2\oplus \cdots \oplus q_{2^n}) \oplus (q_2\oplus \cdots \oplus q_{2^n})  = 
(\zeta_n \ominus\theta) \oplus (\zeta_n \ominus\theta). $$ 
(Recall $\zeta_n$ is given by \eqref{decomp-1-bundle}.)
Note that 
$$\mathrm{rank}(F) = 2\cdot \mathrm{rank}(\zeta_i\ominus\theta) = 2 (2^n-1), $$
and
by \eqref{chern-class-small},
\begin{equation}\label{chern-class-double}
c_{2(2^n - 1)}(F) = (c_{2^n - 1}(\zeta_n\ominus\theta))^2 = (\pi_{n}^{(1)})^*(\gamma_2^2) \cup \cdots \cup (\pi_{n}^{(i)})^*(\gamma^{2^{i}}_{2^{i}}) \cup \cdots \cup (\pi_{n}^{(n)})^*(\gamma^{2^{n}}_{2^n}) \neq 0. 
\end{equation}

Since $E$ is the trivial line bundle, the morphism $\sigma$ also can be identified as a continuous section of $F$: $$\sigma: X_{n} \ni x \mapsto \sigma_x(1_\Comp) \in F_x\subseteq F.$$ Since $ c_{2(2^n - 1)}(F) \neq 0$, the section $\sigma$ has non-empty intersection with the zero section $0_F \subseteq F$. By the Whitney approximation theorem and the transversality theorem (\cite{Hirsch-GTM}), the section $\sigma$ can be perturbed within arbitrarily small tolerance to a smooth section such that $\sigma$ is transverse to the zero section $0_F \subseteq F$. 
%\vskip 1in the map $\sigma$ must have zero point, and then, after an arbitrarily small perturbation, one may assume that $\sigma$ is differentiable and the image of $\sigma$ contains a small open neighbourhood of $0$. By Sard's theorem, after another perturbation, one then may assume that $0$ is a regular value of $\sigma$. 
Thus, one may assume that the set $$Z(g) :=   \{x \in X_{n}: \sigma(x) = 0 \}=\sigma^{-1}(0_F)$$ is a differentiable (orientable) submanifold of $X_{n}$ with codimension %equal to the codimesion of $0_F$ inside $F$
 $2 \cdot \mathrm{rank}(F) = 2(2^{n+1} - 2)$.

%Then let us assume that $g$ is generic in the sense that of the zero locus of $\sigma$
%$$Z(g) := D_0(\sigma) :=  \{x \in X_n: \mathrm{rank}(\sigma(x)) = 0 \}$$
%is a differential  submanifold of $X_{n+1}$ with codimension $2(2(2^n - 1))$.

\subsubsection*{Over the set $Z(g)$}

Rewrite the set $Z(g)$ as
\begin{equation}\label{defn-Z}
Z(g) = \{x \in X_{n}: q_1gq_i(x) = 0,\ q_igq_1(x) = 0,\ i=2, ..., 2^n\}.
\end{equation}
In other words, over the set $Z(g)$, the element $g$ has the diagonal form $q_1gq_1 + q_1^\perp g q_1^\perp$. %Since $g$ is chosen to be regular (generic), the set $Z(g)$ is a differential submanifold of $X_{n+1} = D \times Y_{n}$ with codimension $(2(2^n - 1))$.  %with co-dimension $2(2^n-1)$.

For each function $e_i$, $i=1, ..., \tilde{d}$, note that, by \eqref{almost-gen}, 
\begin{eqnarray*}
q_1\phi_{0, n}(e_i) q_1 & \approx_\eps & q_1 P_i((q_1 + q_1^\perp) g (q_1 + q_1^\perp)) q_1 \\
& = & P_i(q_1 g q_1) + q_1(\sum q_1 \cdots q_1^\perp \cdots q_1) q_1.
\end{eqnarray*}
Restricting to $Z(g)$, one has
\begin{equation}\label{pre-approx-0}
(q_1 \phi_{0, n}(e_i) q_1)|_{Z(g)} \approx_\eps P_i((q_1gq_1)|_{Z(g)}), \quad i=1, ..., \tilde{d}.
\end{equation}
Note that, by \eqref{decp-0-n}, 
$$q_1 \phi_{0, n}(f) q_1 = f \circ \pi_{n}^M,\quad f \in \mathrm{C}(M),$$
and hence
\begin{equation}\label{pre-approx-1}
(e_i \circ \pi_{n}^M)(x) \approx_\eps P_i((q_1gq_1)(x)), \quad x \in Z(g),\  i=1, ..., \tilde{d},
\end{equation}
where, since $q_1$ is the (trivial) constant rank-one projection, the $q_1gq_1$ is regarded naturally as a continuous function on $X_{n}$.  

Consider the maps
$$ \Pi:= (e_1\circ\pi_{n}^{M}, e_2 \circ \pi_{n}^M, ... , e_{\tilde{d}} \circ \pi_{n}^M): Z(g) \to  M \subseteq \Comp^{\tilde{d}}$$
and
$$ \Psi:= (P_1(q_1gq_1), P_2(q_1gq_1), ..., P_{\tilde{d}}(q_1gq_1)): Z(g) \to \mathrm{sp}(q_1gq_1) \to  \Comp^{\tilde{d}}. $$
By \eqref{pre-approx-1},
\begin{equation}\label{main-approx}
\Pi(x) \approx_{\eps \tilde{d}} \Psi(x),\quad x \in Z(g).
\end{equation}
%$$(q_1e_1q_1, q_1e_2q_1, ... , q_1e_{\tilde{d}}q_1) \approx_{\eps \tilde{d}} (P_1(q_1gq_1), P_2(q_1gq_1), ..., P_{\tilde{d}}(q_1gq_1)). $$

Note that the map $\Pi$ is just the projection to $M$,
\begin{equation}\label{large-zero-set}
\xymatrix{
Z(g) \ar[r]^-\iota & X_{n} = M \times Y_n \ar[r]^-{\pi_{n}^M} &  M \subseteq \Comp^{\tilde{d}},
}
\end{equation}
%and note that the induced map
%$$\mathrm{H}^d(D) \to \mathrm{H}^d(Z_n)$$
%is non-zero.
where $Y_n := \mathbb C\mathrm{P}^{2} \times \cdots \times \mathbb C\mathrm{P}^{2^{i}} \times \cdots \times \mathbb C\mathrm{P}^{2^{n}}$,
and also note that the map $\Psi$ factors through the planar set $\mathrm{sp}(q_1gq_1) \subseteq \Comp$.

By the choice of $\eps$, we may assume that the image of $\Psi$ is actually inside $M$, and then, by \eqref{main-approx} and the choice of $\eps$ again, one has
$$[\Pi]_* = [\Psi]_*: \mathrm{H}_*(Z(g)) \to \mathrm{H}_*(M).$$
%the maps $\Pi$ and $\Psi$ are homotopic (as maps from $Z(g)$ to $D$).

%it is homotopic to a map $Z \to D$ which factors through the compact set $\mathrm{sp}(q_1gq_1) \subseteq \Comp$ (since $q_1$ is the trivial rank one projection, the element $q_1gq_1$ belongs to $\mathrm{C}(X_n)$; in particular, it is normal.). %Then the first map is homotopic to a map which factors through the compact set $\mathrm{sp}(q_1gq_1) \subseteq \Comp$. 

%and thus the induced map
%$$\mathrm{H}^d(D) \to \mathrm{H}^d(X_n)$$
%is zero ($d \geq 2$), which is a contradiction. 

%Let us now see why the map \eqref{large-zero-set} is not zero on $\mathrm{H}^d(\cdot)$.

\subsubsection*{The fundamental class of $Z(g)$ and the Thom-Porteous formula}
Recall that, by the choice of $g$, the set $Z(g)$ is a submanifold of $X_{n}$ with codimension $2(2^{n+1} - 2)$, and the image of the fundamental class of $[Z(g)]$ in $\mathrm{H}_{m}(X_{n+1})$,  %$$\mathbb{D}_0 \in \mathrm{H}_{m}(D_0(\sigma)),$$
where $m = \mathrm{dim}(X_{n}) - 2(2^{n+1}-2) = d =  \mathrm{dim}(M),$  
is independent of $\sigma$, and is given by the (special case of the) Thom-Porteous formula,
\begin{equation}\label{TP-identity}
\iota_*([Z(g)]) = \left< c_{2(2^n-1)}(F - E), [X_{n}]\right>,
\end{equation} 
where
$$ c(F - E) = c(F)/c(E).$$

%Let us choose $p_i^j$ carefully such that \eqref{TP-identity} does not hold, and therefore $A$ cannot be singly generated.

%\section{Computations}

Let us calculate the right hand side of \eqref{TP-identity}: By \eqref{chern-class-double} and note that $E$ is trivial, we have 
\begin{eqnarray}\label{comp-right}
&& \left< c_{2(2^n-1)}(F - E)), [X_{n}] \right> \nonumber  \\
& = & \left<  ((\pi_{n}^{(1)})^*(\gamma_2))^2 \cup \cdots \cup ((\pi_{n}^{(i)})^*(\gamma_{2^{i}}))^{2^{i}} \cup \cdots \cup ((\pi_{n}^{(n)})^*(\gamma_{2^{n}}))^{2^{n}}, e \otimes x_{2} \otimes \cdots \otimes x_{2^{n}} \right> \nonumber \\
& = &e \otimes \left<\gamma^2_2, x_2 \right> \otimes \cdots \otimes \left< \gamma_{2^{i}}^{2^{i}}, x_{2^{i}}\right> \otimes \cdots \otimes \left< \gamma_{2^{n}}^{2^{n}}, x_{2^{n}}\right> \nonumber \\
& = & e \otimes 1 \otimes \cdots \otimes 1 \in \mathrm{H}_{d}(X_n) \setminus\{0\}. 
\nonumber
\end{eqnarray}
This implies that % the composition
%$$
%\nu: 
%\xymatrix{
%Z(g) \ar[r]^-\iota & D \times Y_n \ar[r]^-\pi & D
%}.
%$$
%Then 
$$(\Pi)_d([Z(g)]) = (\pi_{n}^M)_d(\iota([Z(g)])) = (\pi_{n}^M)_d(e \otimes 1\otimes \cdots \otimes 1) = e \neq 0.$$
But the map $\Pi$ is homotopic to the map $\Psi$ which factors through a planar set, and thus $$(\Pi)_{d}([Z(g)]) = (\Psi(Z(g)))_{d} = 0, $$ which is a contradiction.

Thus, the set $\mathcal E$ is not contained in any singly generated sub-C*-algebra, as desired.
\end{proof}

%In the rest of this section, let us show that the stable rank of $A$ is strictly larger than $1$.

\subsection{Stable rank and real rank of $A$}

Let us show that both the stable rank of $A$ and the real rank of $A$ are at least $2$.

Let $f: M \to N$ be a continuous map, where $M$ and $N$ are orientable manifolds with the same dimension $d$.  Fix orientations of $M$ and $N$. Let $y \in N$ be a regular point. Then the degree of $f$ at $y$, denoted by $\mathrm{deg}(f, y)$, is defined by $$\mathrm{deg}(f, y) = \sum_{x \in f^{-1}(y)} w_x,$$ where $w_x = \pm 1$ depending on whether $\mathrm{d}f(x): \mathrm{T}_M(x) \to \mathrm{T}_N(y)$ preserves the orientation or reverses it. If $N$ is connected, then $\mathrm{deg}(f, y)$ is independent of the choice of $y$, and it is denoted by $\mathrm{deg}(f)$, and it satisfies $f_*(\mu) = \mathrm{deg}(f) \nu$ (note that $\mathrm{H}_d(N) \cong \Int$ in this case). (See, for instance, \cite{Hirsch-GTM}.)

%fundamental classes $\mu \in \mathrm{H}_d(M)$ and $\nu \in \mathrm{H}_d(N) \cong \Int$. The degree of $f$ equals to the integer $\mathrm{deg}(f)$ such that $f_*(\mu) = \mathrm{deg}(f) \nu$. 

\begin{lem}\label{nenzero-dc-lem}
Let $M$ and $N$ be differentiable manifolds with the same dimension, and assume that $N$ is connected. Let $L \subseteq N$ be a connected differentiable submanifold, and let $f: M \to N$ be a differentiable map which transverses to $L$. Assume that $\mathrm{deg}(\pi) = 1$. Then $\mathrm{deg}(f |_{f^{-1}(L)}) \neq 0$.
\end{lem}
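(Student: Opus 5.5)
Throughout I read the hypothesis ``$\mathrm{deg}(\pi) = 1$'' as $\mathrm{deg}(f) = 1$. I also assume what the degree discussion above needs: $M$ is compact, $M$ and $N$ are oriented, $L$ is closed and oriented, and $f^{-1}(L)$ carries the induced orientation described below. The plan is to show that $\mathrm{deg}(f|_{f^{-1}(L)}) = \mathrm{deg}(f)$. I will do this by computing both degrees at a single well-chosen point $y \in L$ and matching their local contributions term by term.

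\emph{Step 1: the submanifold and its orientation.} By transversality, $K := f^{-1}(L)$ is a closed submanifold of $M$ with $\mathrm{codim}_M K = \mathrm{codim}_N L$. In particular $\dim K = \dim L$, so $\mathrm{deg}(f|_K \colon K \to L)$ makes sense. Moreover $df$ induces a bundle isomorphism $\nu_K \cong (f|_K)^*\nu_L$ between the normal bundles. Fix orientations of $N$ and $L$; these orient $\nu_L$. Orient $\nu_K$ by pulling back through this isomorphism, and then orient $K$ so that $T K \oplus \nu_K \cong TM|_K$ is orientation-compatible with $M$. Note that $K \neq \emptyset$, since $\mathrm{deg}(f) = 1 \neq 0$ forces $f$ to be surjective.

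\emph{Step 2: choice of $y$.} Since $L$ has measure zero in $N$, Sard's theorem for $f$ alone cannot guarantee a regular value of $f$ lying in $L$. Instead, apply Sard's theorem to $f|_K \colon K \to L$ to get a point $y \in L$ that is a regular value of $f|_K$. I claim $y$ is then also a regular value of $f$. Take $x \in f^{-1}(y) \subseteq K$. Then $df(x)$ maps $T_xK$ onto $T_yL$, and by transversality $T_yN = df(T_xM) + T_yL$. Together these give that $df(x) \colon T_xM \to T_yN$ is surjective, hence an isomorphism because the dimensions are equal. Also $f^{-1}(y) = (f|_K)^{-1}(y)$, and this set is finite by compactness.

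\emph{Step 3: comparing signs.} At each $x \in f^{-1}(y)$, with respect to the splittings $T_xM = T_xK \oplus \nu_{K,x}$ and $T_yN = T_yL \oplus \nu_{L,y}$, the map $df(x)$ is block upper-triangular. Its diagonal blocks are $d(f|_K)(x)$ and the induced normal map $\nu_{K,x} \to \nu_{L,y}$. The normal block is orientation-preserving by the choice in Step 1. Hence the sign $w_x$ for $f$ equals the sign for $f|_K$. Summing over $f^{-1}(y)$ gives $\mathrm{deg}(f|_K) = \mathrm{deg}(f) = 1 \neq 0$. Here I use that $L$ is connected, so $\mathrm{deg}(f|_K)$ does not depend on the regular value chosen.

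The main obstacle is the orientation bookkeeping in Steps 1 and 3: the orientation convention for $K$ must make the normal block orientation-preserving. Even if the conventions are off by a global sign, the conclusion $\mathrm{deg}(f|_K) = \pm 1 \neq 0$ still holds.
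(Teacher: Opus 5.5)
Your proof is correct. It shares the paper's setup: pick $y\in L$ regular for $f|_{f^{-1}(L)}$ and compare the two signed counts over the same finite fibre $f^{-1}(y)$. The decisive step, however, is different.

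\textbf{The paper's route.} It never compares individual signs. From $\deg(f)=\sum_{i=1}^n w_i=1$ it concludes that $n$ is odd. Hence any sum $\sum_{i=1}^n w_i'$ of $n$ terms $\pm1$ is odd, and so nonzero.

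\textbf{Your route.} You orient $K=f^{-1}(L)$ through $\nu_K\cong (f|_K)^*\nu_L$. The block-triangular form of $df(x)$ then gives $w_i'=w_i$ for every $i$, i.e.\ $\deg(f|_K)=\deg(f)$.

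\textbf{What each buys.} The parity argument needs no orientation bookkeeping and does not care how $f^{-1}(L)$ is oriented. But it only gives $\deg(f|_K)\equiv\deg(f)\pmod 2$, so it genuinely uses that $\deg(f)$ is odd. Your argument requires the induced orientation, but in return it proves the sharper identity and works whenever $\deg(f)\neq 0$. In the application in Proposition~\ref{srr}, it shows directly that $(\tilde\pi|_\Sigma)_2$ sends the fundamental class of $\Sigma$ to that of $S$.

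Your Step~2 also supplies a point the paper uses without comment. A regular value of $f|_{f^{-1}(L)}$ lying in $L$ is automatically a regular value of $f$, and this is what allows $\deg(f)$ to be computed at $y$ at all.
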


\begin{proof}
Since $f$ transverses to $L$, the preimage $f^{-1}(L)$ is a submanifold of $M$ with same dimension as $L$. Fix an orientation of $f^{-1}(L)$, and then fix an orientation of $M$ which is compatible with $f^{-1}(L)$. Also fix an orientations of $L$ and $N$ which are compatible.

Pick a regular value $y \in L \subseteq N$ for $f: f^{-1}(L) \to L$. Then $f^{-1}(y)$ is a zero-dimensional submanifold of $M$, and hence $$f^{-1}(y) = \{x_1, x_2, ..., x_n\}.$$ 

Consider the tangent spaces $$\mathrm{T}_M(x_i), \quad i=1, ..., n, $$ and the tangent maps
$$\mathrm{d}f(x_i): \mathrm{T}_M(x_i) \to \mathrm{T}_N(y). $$

Then
$$
\mathrm{deg}(f) = \sum_{i=1}^n w_i,
$$
where $w_i = \pm 1$, $i=1, ..., n$, depending on whether the tangent map $\mathrm df: \mathrm{T}_M(x_i) \to \mathrm{T}_N(y)$ preserves the orientation or reverses it. Since $\mathrm{deg}(f) = 1$, necessarily $n$ is odd.

On the other hand
$$
\mathrm{deg}(f|_{f^{-1}(L)}) = \sum_{i=1}^n w'_i,
$$
where $w'_i = \pm 1$, $i=1, ..., n$, depends on whether the tangent map $\mathrm d(f|_{f^{-1}(L)}): \mathrm{T}_{f^{-1}(L)}(x_i) \to \mathrm{T}_L(y)$ preserves the orientation or reverses the orientation. Since $n$ is odd, the sum $\sum_{i=1}^n w'_i$ must be nonzero, as desired.
%
% \mathrm{T}_M(x_2), ..., \mathrm{T}_M(x_n). $$
\end{proof}

\begin{prop}\label{srr}
With $A$ the C*-algebra considered above, the following bounds on the stable rank and real rank hold:  $$2 \leq \mathrm{tsr}(A) \leq \left\{ 
\begin{array}{ll}
3, & d \leq 4, \\
4, & d > 4;
\end{array}
\right.$$
and 
$$  2 \leq \mathrm{rr}(A) \leq 
\left\{ 
\begin{array}{ll}
2, & d = 2, \\
3, & d > 2.
\end{array}
\right.
$$
(Recall that $d = \mathrm{dim}(M) \geq 2$.)
\end{prop}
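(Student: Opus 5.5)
The plan is to prove the upper bounds from known dimension-ratio estimates for homogeneous C*-algebras, and the lower bounds by reusing the zero-set argument from the proof of Theorem \ref{generator-thm}, combined with Lemma \ref{nenzero-dc-lem}.

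\emph{Upper bounds.} I would use the estimates for corners $p_n(\mathrm C(X_n)\otimes\mathcal K)p_n$ with $\mathrm{rank}(p_n)=2^n$, together with the facts $\mathrm{tsr}(\varinjlim A_n)\le\liminf\mathrm{tsr}(A_n)$ and $\mathrm{rr}(\varinjlim A_n)\le\liminf\mathrm{rr}(A_n)$.

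For stable rank, the Rieffel/Nistor formula gives
\[
\mathrm{tsr}(A_n)\le\left\lceil \frac{\lfloor \dim X_n/2\rfloor}{2^n}\right\rceil+1 .
\]
By \eqref{dim-Xn}, $\lfloor \dim X_n/2\rfloor=\lfloor d/2\rfloor+2^{n+1}-2$. For large $n$ the right-hand side is therefore $3$ when $\lfloor d/2\rfloor\le 2$ and $4$ otherwise. This covers the stated cases, perhaps with room to spare.

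For real rank, the Beggs--Evans estimate gives
\[
\mathrm{rr}(A_n)\le\left\lceil \frac{\dim X_n}{2\cdot 2^n-1}\right\rceil
=\left\lceil 2+\frac{d-2}{2^{n+1}-1}\right\rceil .
\]
This equals $2$ when $d=2$, and $3$ when $d>2$ and $n$ is large.

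\emph{Lower bounds.} Choose a smooth $h\in\mathrm C(M)=A_0$ that is transverse to $0\in\Comp$ and whose zero set $L=h^{-1}(0)$ is a nonempty connected codimension-$2$ submanifold; this is possible since $d\ge 2$, and for $d=2$, $L$ is a point. Put $a=\phi_{0,\infty}(h)$. I will show $a$ is not a limit of invertibles, which gives $\mathrm{tsr}(A)\ge 2$. Applying the same argument to the pair $(\mathrm{Re}\,a,\mathrm{Im}\,a)$ shows that this pair cannot be approximated by self-adjoint pairs $(b_1,b_2)$ with $b_1^2+b_2^2$ invertible, which gives $\mathrm{rr}(A)\ge 2$. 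In the real rank case I set $b=b_1+ib_2$. The cut-down below uses the entries $q_ibq_1$ and $q_ib^*q_1$, so it automatically makes both $b_1$ and $b_2$ block diagonal, and on the relevant set $q_1bq_1$ is nonvanishing because $b_1^2+b_2^2$ is invertible.

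Suppose $b\in A_n$ is invertible with $b\approx_\delta\phi_{0,n}(h)$. Exactly as in the proof of Theorem \ref{generator-thm}, form the section
\[
\sigma=(q_ib^*q_1,\,q_ibq_1)_{i\ge 2}
\]
of $F$, perturb it to be transverse to the zero section, and let $Z=Z(b)$ be its zero set. On $Z$, $b$ is block diagonal with respect to $q_1$, so the scalar function $u=q_1bq_1|_Z$ is invertible, i.e.\ nowhere zero. At the same time $u\approx_\delta h\circ\Pi$. By the Thom--Porteous computation \eqref{TP-identity} and \eqref{comp-right}, $\Pi=\pi_n^M\circ\iota\colon Z\to M$ satisfies $\Pi_*[Z]=e$, so $\mathrm{deg}(\Pi)=1$.

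After a small perturbation of $\Pi$, making it transverse to $L$, Lemma \ref{nenzero-dc-lem} gives $\mathrm{deg}(\Pi|_{\Pi^{-1}(L)})\ne 0$. Now take a closed tubular neighbourhood $D(L)$ of $L$ on which $h$ restricts fibrewise to a degree-one map of normal discs. The map $h\circ\Pi$, restricted to $\Pi^{-1}(D(L))$ relative to its boundary, sends the boundary into $\Comp\setminus\{0\}$. Its local degree at $0$ is $\pm\mathrm{deg}(\Pi|_{\Pi^{-1}(L)})\neq 0$. Since $u$ is $\delta$-close to $h\circ\Pi$, and $\delta$ is smaller than $\min|h|$ on $\partial D(L)$, the two maps are homotopic through maps that do not vanish on the boundary. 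Hence $u$ must vanish somewhere in $\Pi^{-1}(D(L))$, which is a contradiction.

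\emph{Main obstacle.} The hardest step is making this last degree argument rigorous: orienting the normal bundles, checking that $h\circ\Pi$ near $\Pi^{-1}(L)$ really computes the relative degree as $\mathrm{deg}(\Pi|_{\Pi^{-1}(L)})$, and handling the perturbations so that transversality, closeness and non-vanishing hold simultaneously. I would organise it via the relative class in $\mathrm H^2(\Comp,\Comp\setminus 0)$ pulled back to $\mathrm H^2(\Pi^{-1}D(L),\partial)$ and evaluated on the Thom-dual class of $\Pi^{-1}(L)$.
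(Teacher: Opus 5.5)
Your upper bounds match the paper's: Nistor's formula for $\mathrm{tsr}(A_n)$, the Beggs--Evans formula for $\mathrm{rr}(A_n)$, and the $\liminf$ inequality for inductive limits. Your lower-bound argument is sound in substance but takes a different route from the paper's.

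It contains one false claim that needs repair. For $d=2$, the nonempty transverse zero set $L=h^{-1}(0)$ cannot be a single point, i.e.\ cannot be connected. The reason is that $[L]$ is Poincar\'e dual to the Euler class of the trivial line bundle, so it is null-homologous in $M$. On a closed surface this means the zeros have signed count $0$, so there are at least two. The fix costs nothing, because your degree argument is local: replace $L$ by one component $L_0$, and $D(L)$ by a tubular neighbourhood of $L_0$ disjoint from the rest of $h^{-1}(0)$. For $d\ge 3$ a connected $L$ does exist (e.g.\ a small unknotted $S^{d-2}$ in a chart, since $\pi_{d-1}(S^1)=0$ lets you extend $h$ without new zeros), but connectedness is never actually used.

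\textbf{Comparison with the paper.} The paper's route is dual to yours: instead of a codimension-$2$ zero locus it uses a $2$-dimensional slice.
It picks a surface $S\subseteq M$ and a disc $S^+\subseteq S$ on which the test function is a homeomorphism onto the unit disc. It makes $\tilde\pi$ transverse to $S$ and applies Lemma \ref{nenzero-dc-lem} to $\Sigma=\tilde\pi^{-1}(S)\to S$. It then runs a Mayer--Vietoris argument on $\Sigma$: the $1$-cycle $\partial_\Sigma[\Sigma]$ on $\tilde\pi^{-1}(S^1)$ bounds in $\tilde\pi^{-1}(S^+)$, where $q_1gq_1$ has no zeros, yet the test function winds nontrivially along it.
This is elementary (only winding numbers of curves) but needs transversality bookkeeping for both $S$ and $S^1$.

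Your version instead pulls back $U=h^*\tau\in\mathrm H^2(D(L_0),\partial D(L_0))$, with $\tau$ a generator of $\mathrm H^2(\Comp,\Comp\setminus\{0\})$, to $W=\tilde\Pi^{-1}(D(L_0))$. There, nonvanishing of $u$ together with the straight-line homotopy forces $\tilde\Pi^*U=0$. This approach also disposes of your ``main obstacle'' in one line. By the projection formula,
$\tilde\Pi_*(\tilde\Pi^*U\cap[W,\partial W])=U\cap[D(L_0),\partial D(L_0)]=\pm[L_0]\neq 0$.
This uses only $\deg\Pi=1$, after making $\tilde\Pi$ transverse to $\partial D(L_0)$ so that $W$ is a manifold with boundary. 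So neither Lemma \ref{nenzero-dc-lem} nor transversality to $L_0$ is needed.

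Finally, in the real rank part, your formulation (each $b_j$ close to $\mathrm{Re}\,a$, $\mathrm{Im}\,a$ respectively) is the one the argument actually requires. The paper's condition \eqref{perturb-in-rr-0} controls only $b_1^2+b_2^2$. Taken literally, it is met by $b_2=0$ and $b_1=(\phi_{0,\infty}(|a|^2)+\eps)^{1/2}$, with $a\in\mathrm C(M)$ the paper's test function.
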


\begin{proof}
\noindent {\em Stable Rank.} Let us work on the stable rank first.

By \cite{Nistor-tsr}, $$\mathrm{tsr}(A_n) = \lceil \lfloor  2^{n+1} -2 + d/2 \rfloor /2^n \rceil + 1,$$ and therefore
$$ \mathrm{tsr}(A) \leq \left\{ 
\begin{array}{ll}
3, & d \leq 4, \\
4, & d > 4.
\end{array}
\right.$$

Let us show that $ \mathrm{tsr}(A) \geq 2$. It is enough to find an element of $A$ which is not in the closure of the invertible elements. 

On the manifold $M$, choose a two-dimensional orientable connected submanifold $S$ (if $\mathrm{dim}(M) = 2$, just choose $S$ to be a connected component of $M$; otherwise, $S$ may be chosen to be a  two-dimensional sphere). On $S$, choose $S^+ \subseteq S$ which is homeomorphic to the unit disk, and fix a homeomorphism $$a: S^+ \to \{z \in \Comp: \abs{z}\leq 1\}.$$ Extend $a$ to a continuous function from $M$ to the unit disk, and still denote it by $a$. Regard $a$ as an element of $\mathrm{C}(M) = A_1$.

Suppose there were $g \in A_n$ for a sufficiently large $n$ that $g$ is invertible and 
\begin{equation}\label{approx-inv-0}
\norm{\phi_{0, n}(a) - g} < 1.
\end{equation} 
Write $$A_n = (q_1+\cdots + q_{2^n}) (\mathrm{C}(X_n)\otimes \mathcal K) (q_1+\cdots + q_{2^n}),$$ and consider the set $Z(g) \subseteq X_n=M \times Y_n$ as below:
$$Z(g) = \{x \in X_n: q_1gq_i(x) = q_1g^*q_i(x) = 0,\ i=2, ..., 2^n\}.$$ With the same argument as in the proof of Theorem {generator-thm}, by the transversality theorem, the element $g$ can be chosen further that $Z(g) \subseteq X_n$ is a differentiable (orientable) submanifold of $X_n$ with the same dimension as $M$. Note that, restricted to $Z(g)$, the element $g$ has the diagonal form $q_1 g q_1 + q_1^\perp g q_1^\perp$.
Using the Thom-Porteous formula, the same computation as in the proof of  Theorem \ref{generator-thm} shows that
$$(\pi_n^M)_*[Z(g)] = [M],$$
and therefore, restricted to the connected component of $M$ which contains $S$, one has 
$$\mathrm{deg}(\pi_n^M|_{Z(g)}) = 1.$$

%Consider the set $Z(g) \subseteq M \times Y_n$ similarly as above, and with suitable choice of $g$, we may assume that $Z(g) \subseteq M \times Y_n$ is a differentiable submanifold with dimension $\mathrm{dim}(M)$. 

Let us consider the cut-down $q_1gq_1$. Since $q_1$ is the rank-one trivial projection, it is a function on $X_n$. To simply the notation, let us still denote it by $g$. Then, restricted to $Z(g)$, by \eqref{approx-inv-0}, one has 
%Then, restricted to $Z(g)$, there is a smooth function, still denoted by $g$, such that
$$\norm{ a \circ \pi^M_{n}|_{Z(g)} - g} < 1.$$ 
By the transversality theorem, there is a map $\tilde{\pi}: Z(g) \to M$ which is arbitrarily close to $\pi^M_{n}$ and transverse to $S \subseteq M$. Then, $\tilde{\pi}$ can be chosen such that 
\begin{equation}\label{approx-inv}
  \norm{ a \circ \tilde{\pi} - g} < 1, 
\end{equation}  
$$\mathrm{deg}(\tilde{\pi}) = \mathrm{deg}(\pi^M_{n}|_{Z(g)}) = 1, $$
and the set 
$$ \Sigma: = \tilde{\pi}^{-1}(S)$$ is a 2-dimensional orientable submanifold of $Z(g)$.

Define
$$\Sigma^1:= \tilde{\pi}^{-1}(S^1),\quad \Sigma^+:=\tilde{\pi}^{-1}(S^+), \quad \Sigma^-:=\tilde{\pi}^{-1}(S^-),$$
where
$$S^-=S \setminus \mathrm{int}(S^+) \quad \mathrm{and} \quad S^1 = S^+ \cap S^-. $$
Then, applying the Mayer-Vietoris sequence (and its naturality), one has the maps between exact sequences 
$$
\xymatrix{
\cdots \ar[r] & \mathrm{H}_2(\Sigma) \ar[r]^{\partial_\Sigma} \ar[d]^{(\tilde{\pi}|_{\Sigma})_2} & \mathrm{H}_1(\Sigma^1) \ar[r]^-{(\iota^+, \iota^-)} \ar[d]^{(\tilde{\pi}|_{\Sigma^1})_1} & \mathrm{H}_1(\Sigma^+) \oplus  \mathrm{H}_1(\Sigma^-) \ar[r] \ar[d] & \cdots \\
\cdots \ar[r]  & \mathrm{H}_2(S) \ar[r]^{\partial_{S}} & \mathrm{H}_1(S^1) \ar[r]^-{(\iota^+, \iota^-)} & \mathrm{H}_1(S^+) \oplus  \mathrm{H}_1(S^-) \ar[r]  & \cdots
},
$$
where $\iota^{\pm}$ are the embeddings of the corresponding spaces.

By Lemma \ref{nenzero-dc-lem}, $$\mathrm{deg}(\tilde{\pi}|_{\Sigma}) \neq 0,$$ and hence the map $(\tilde{\pi}|_{\Sigma})_2$ is nonzero.
Since the boundary map $\partial_{S}$ is an isomorphism, the element $$\sigma := \partial_\Sigma(\mu) \in \mathrm{H}_1(\Sigma^1)$$
is not zero, where $\mu \in \mathrm{H}_2(\Sigma)$ is the fundamental class.  
Note that, by the exactness, 
\begin{equation}\label{zero-2-dim}
\iota^+(\sigma) = 0 \in \mathrm{H}_1(\Sigma^+).
\end{equation}

Note that the restriction of $a$ to $S^+$ is a homeomorphism to the disc which sends $S^1$ to the unit circle in $\Comp$, and also note that $$ \mathrm{H}_1(\Comp\setminus\{0\}) \ni (a\circ \tilde{\pi}|_{\Sigma^1})_1(\sigma) \neq 0. $$
Consider the map $$g: Z(g) \to \Comp\setminus\{0\}.$$ By \eqref{approx-inv}, its restriction to $\Sigma^1$ is homotopic to $(a\circ \tilde{\pi})|_{\Sigma^1}$ (as a map to $\Comp\setminus\{0\}$), and therefore  
$$(g|_{\Sigma^1})_1(\sigma) = (a\circ \tilde{\pi}|_{\Sigma^1})_1(\sigma) \neq 0.$$

On the other hand, the map $g|_{\Sigma^1}$ factors through
$$\xymatrix{
\Sigma^1 \ar[r]^-{\iota^+} & \Sigma^+ \ar[r]^-{g} & \Comp\setminus\{0\},
}
$$
and hence, by \eqref{zero-2-dim},
$$ (g|_{\Sigma^1})_1(\sigma) = (g)_1(\iota^+(\sigma)) = 0,$$
which is a contradiction. Therefore, the element $a$ cannot be approximated by invertible elements within distance $1$. 

\noindent {\em Real Rank.} Let us now work on the real rank of $A$. The argument is almost the same as the argument for stable rank.

By \cite{BE-rr} (and argument of \cite{Nistor-tsr}; see proof of Theorem 10 of \cite{Vill-sr}), 
$$\mathrm{rr}(A_n) = \lceil \frac{ d+ 2^{n+2} - 4}{2^{n+1} - 1} \rceil =  \lceil 2+  \frac{ d - 2}{2^{n+1} - 1} \rceil, $$
and therefore
$$ \mathrm{rr}(A) \leq \left\{ 
\begin{array}{ll}
2, & d = 2, \\
3, & d > 2.
\end{array}
\right.$$

Let us show that $\mathrm{rr}(A) \geq 2$. As the case of stable rank, choose a two-dimensional (orientable) submanifold $S$, and choose $S^+ \subseteq S$ which is homeomorphic to the unit disk. Fix a homeomorphism $a$ from $S^+$ to the unit disk, and extend it to $M$. Then consider the real-valued functions 
$$ \Re(a), \ \Im(a) \in \mathrm{C}(M) = A_1. $$  

Let us show that there do not exist self-adjoint elements $b_1, b_2 \in A$ such that 
\begin{equation}\label{perturb-in-rr-0}
 \norm{(b_1^2 + b_2^2) - (\phi_{0, \infty}(\Re(a))^2 + \phi_{0, \infty}(\Im(a))^2)} < 1
\end{equation} 
and $b_1^2 + b_2^2$ is invertible. 

Assume such self-adjoint elements $b_1$ and $b_2$ exist. Without loss of generality, one may assume that $$b_1, b_2 \in A_n = (q_1+\cdots + q_{2^n}) (\mathrm{C}(X_n)\otimes \mathcal K) (q_1+\cdots + q_{2^n})$$ for some $n$, and then, consider the set $Z(b_1, b_2) \subseteq X_n=M \times Y_n$ as below:
$$Z(b_1, b_1) = \{x \in X_n: q_1b_1q_i(x) = q_1b_2q_i(x) = 0,\ i=2, ..., 2^n\},$$ and one may assume further that $Z(b_1, b_2) \subseteq X_n$ is a differentiable (orientable) submanifold of $X_{n}$ with the same dimension as $M$. Note that, restricted to $Z(b_1, b_2)$, both $b_1$ and $b_2$ have the diagonal form $q_1 \cdot q_1 + q_1^\perp \cdot q_1^\perp$.
Using the Thom-Porteous formula, the same computation as Theorem \ref{generator-thm} shows that
$$\mathrm{deg}(\pi_n^M|_{Z(b_1, b_2)}) = 1.$$

Let us consider the cut-downs $q_1b_1q_1$ and $q_1b_2q_1$. Since $q_1$ is the rank-one trivial projection, both are functions on $X_n$. To simply the notation, let us still denote them by $b_1$ and $b_2$. Then, restricted to $Z(b_1, b_2)$, by \eqref{perturb-in-rr-0}, one has 
\begin{equation}
\norm{(b_1^2 + b_2^2) - ((\Re(a) \circ \pi_n^M|_{Z(b_1, b_2)})^2 + (\Im(a) \circ \pi_n^M|_{Z(b_1, b_2)})^2) } < 1
\end{equation}

%$$ \norm{\Re(a) \circ \pi_n^M|_{Z(b_1, b_2)} - b_1} < 1 \quad \mathrm{and} \quad  \norm{ \Im(a) \circ \pi_n^M|_{Z(b_1, b_2)} - b_2} < 1. $$

By the transversality theorem, there is a map $\tilde{\pi}: Z(b_1, b_2) \to M$ which is arbitrarily close to $\pi^M_{n}$ and transverses to $S \subseteq M$. Then, $\tilde{\pi}$ can be chosen such that 
\begin{equation}\label{approx-inv-rr}
\norm{(b_1^2 + b_2^2) - ((\Re(a) \circ \tilde{\pi})^2 + (\Im(a) \circ \tilde{\pi})^2) } < 1
\end{equation}
%$$(\pi^M_{n}|_{Z(b_1, b_2)})_* = (\tilde{\pi})_*, $$
$$\mathrm{deg}(\tilde{\pi}) = \mathrm{deg}(\pi^M_{n}|_{Z(b_1, b_2)}) = 1, $$
and the set 
$$ \Sigma: = \tilde{\pi}^{-1}(S)$$ is a 2-dimensional submanifold of $Z(b_1, b_2)$. Then the same argument as in the stable rank case applyed to the function $$g:=b_1 + i b_2:  Z(b_1, b_2) \to \Comp$$
leads to a contradiction with the vanishing of the homology class $g|_{\Sigma_1}(\sigma)$. This shows that such $b_1$ and $b_2$ do not exist. Therefore the real rank of $A$ is at least $2$.
\end{proof}

\begin{rem}
%The same argument as \cite{Vill-sr} shows that
%$$2 \leq \mathrm{tsr}(A) \leq (3, 4). $$
Is every simple separable C*-algebra of stable rank one singly generated? (Without simplicity, the C*-algebra $\mathrm{C}(X)$, where $X$ is a one-dimensional non-planar compact metrizable space, provides an example of a separable C*-algebra of stable rank one but is not singly generated; see Proposition 2 of \cite{Nagisa-04}.) Is every separable C*-algebra (not necessarily simple) of real rank zero singly generated? 
\end{rem}

\bibliographystyle{plainurl}
%\bibliography{operator_algebras}

\begin{thebibliography}{10}

\bibitem{BE-rr}
E.~J.~Beggs and D.~E.~Evans.
\newblock The real rank of algebras of matrix valued functions.
\newblock {\em Internat. J. Math.}, 2(2):131--138, 1991.
\newblock \href {https://doi.org/10.1142/S0129167X91000089}
  {\path{doi:10.1142/S0129167X91000089}}.

\bibitem{Ge_2003}
L.~M.~Ge.
\newblock On ``{Problems on von Neumann Algebras by R. Kadison, 1967}''.
\newblock {\em Acta Mathematica Sinica, English Series}, 19(3):619--624, 2003.
\newblock URL: \url{http://dx.doi.org/10.1007/s10114-003-0279-x}, \href
  {https://doi.org/10.1007/s10114-003-0279-x}
  {\path{doi:10.1007/s10114-003-0279-x}}.

\bibitem{Hatcher-AT}
A.~Hatcher.
\newblock {\em Algebraic topology}.
\newblock Cambridge University Press, Cambridge, 2002.

\bibitem{Hirsch-GTM}
M.~W.~Hirsch.
\newblock {\em Differential Topology}.
\newblock Number~33 in Graduate Texts in Mathematics. Springer-Verlag, 1976.

\bibitem{Kadison-list}
R.~V.~Kadison.
\newblock Problems on von {Neumann} algebras.
\newblock unpublished manuscript, presentedat Conference on Operator Algebras
  and Their Applications, Louisiana State Univ., Baton Rouge, La., 1967.

\bibitem{LNR-26}
C.~G.~Li, Z.~Niu, and V.~Ruzicka.
\newblock Villadsen algebras are singly generated.
\newblock {\em J. Noncommut. Geom., accepted.}, 2026.

\bibitem{Li-Sh-AD}
W.~H.~Li and J.~H.~Shen.
\newblock A note on approximately divisible {C*}-algebras.
\newblock 04 2008.
\newblock URL: \url{https://arxiv.org/pdf/0804.0465.pdf}, \href
  {https://arxiv.org/abs/0804.0465} {\path{arXiv:0804.0465}}.

\bibitem{Nagisa-04}
M.~Nagisa.
\newblock Single generation and rank of {$C^*$}-algebras.
\newblock In {\em Operator algebras and applications}, volume~38 of {\em Adv.
  Stud. Pure Math.}, pages 135--143. Math. Soc. Japan, Tokyo, 2004.
\newblock \href {https://doi.org/10.2969/aspm/03810135}
  {\path{doi:10.2969/aspm/03810135}}.

\bibitem{Nistor-tsr}
V.~Nistor.
\newblock Stable rank for a certain class of type {I} {C*}-algebras.
\newblock {\em J. Operator Theory}, 17(2):365--373, 1987.

\bibitem{Olsen_1976}
C.~L.~Olsen and W.~R.~Zame.
\newblock Some {C*}-algebras with a single generator.
\newblock {\em Transactions of the American Mathematical Society}, 215:205,
  January 1976.
\newblock URL: \url{http://dx.doi.org/10.2307/1999722}, \href
  {https://doi.org/10.2307/1999722} {\path{doi:10.2307/1999722}}.

\bibitem{Thiel-Winter-14}
H.~Thiel and W.~Winter.
\newblock The generator problem for {$\mathcal Z$}-stable {C*}-algebras.
\newblock {\em Trans. Amer. Math. Soc.}, 366(5):2327--2343, 2014.
\newblock \href {https://doi.org/10.1090/S0002-9947-2014-06013-3}
  {\path{doi:10.1090/S0002-9947-2014-06013-3}}.

\bibitem{Toms-26}
A~S.~Toms.
\newblock Schubert calculus and uniform property {$\Gamma$}.
\newblock 06 2026.
\newblock URL: \url{https://arxiv.org/pdf/2606.12188.pdf}, \href
  {https://arxiv.org/abs/2606.12188} {\path{arXiv:2606.12188}}.

\bibitem{Topping_1968}
D.~M.~Topping.
\newblock {UHF} algebras are singly generated.
\newblock {\em Math. Scand.}, 22:224, 1968.
\newblock URL: \url{http://dx.doi.org/10.7146/math.scand.a-10886}, \href
  {https://doi.org/10.7146/math.scand.a-10886}
  {\path{doi:10.7146/math.scand.a-10886}}.

\bibitem{Vill-sr}
J.~Villadsen.
\newblock On the stable rank of simple {C*}-algebras.
\newblock {\em J. Amer. Math. Soc.}, 12(4):1091--1102, 1999.
\newblock URL: \url{http://dx.doi.org/10.1090/S0894-0347-99-00314-8}, \href
  {https://doi.org/10.1090/S0894-0347-99-00314-8}
  {\path{doi:10.1090/S0894-0347-99-00314-8}}.

\bibitem{Willig_1974}
P.~Willig.
\newblock Generators and direct integral decompositions of {W*}-algebras.
\newblock {\em Tohoku Mathematical Journal}, 26(1), January 1974.
\newblock URL: \url{http://dx.doi.org/10.2748/tmj/1178241231}, \href
  {https://doi.org/10.2748/tmj/1178241231} {\path{doi:10.2748/tmj/1178241231}}.

\end{thebibliography}

\end{document}